\newtheorem{theorem}{Theorem}[section]
\newtheorem{lemma}[theorem]{Lemma}
\newtheorem{corollary}[theorem]{Corollary}
\newtheorem{remark}{Remark}
\newtheorem{assumption}{Assumption}
\newenvironment{pfof}[1]{\vspace{1ex}\noindent{\itshape 
	Proof of #1:}\hspace{0.5em}} {\hfill\QEDBL\vspace{1ex}}
\newcommand{\mc}{\mathcal}
\newcommand{\rank}{\operatorname{rank}}
\newcommand{\real}{\mathbb{R}} 
\newcommand{\integ}{\mathbb{Z}}
\newcommand{\realpos}{\mathbb{R}_{\geq 0}}
\newcommand{\integpos}{\mathbb{Z}_{\geq 0}}
\newcommand{\tsp}{\mathsf{T}} 
\newcommand{\inv}{{\negat 1}} 
\newcommand{\negat}{\scalebox{0.75}[.9]{\( - \)}}
\newcommand*{\QEDB}{\hfill\ensuremath{\square}}
\newcommand*{\QEDBL}{\hfill\ensuremath{\blacksquare}}
\newcommand\oprocendsymbol{\hbox{$\square$}}
\newcommand\oprocend{\relax\ifmmode\else\unskip\hfill%
\fi\oprocendsymbol}
\newcommand{\Ply}{P}      
\newcommand{\Qly}{Q}      
\newcommand{\map}[3]{#1: #2 \rightarrow #3}
\newcommand{\setdef}[2]{\{#1 \; : \; #2\}}
\newcommand{\sbs}[2]{{#1}_{\textup{#2}}}
\newcommand{\sps}[2]{{#1}^{\textup{#2}}}
\newcommand{\norm}[1]{\Vert #1 \Vert}
\newcommand{\distA}[1]{\vert #1 \vert_{\mc U_k^*}}
\newcommand{\dist}[2]{\operatorname{dist}(#1,#2)}
\title{\LARGE \textbf{Data-Driven Synthesis of Optimization-Based Controllers for \\ Regulation of Unknown Linear Systems}}
\author{Gianluca Bianchin \quad Miguel Vaquero \quad Jorge Cort\'{e}s \quad Emiliano Dall'Anese\thanks{
This work was supported by the National Science Foundation Awards CMMI 2044946 and 2044900, and by  NREL  through  the subcontract UGA-0-41026-148. G. Bianchin and E. Dall'Anese are with the Department of Electrical, Computer, and Energy Engineering, University of Colorado Boulder. M. Vaquero is with the School of Human Sciences and Technology, IE University. J. Cort\'es is with the Department of Mechanical and Aerospace Engineering, University of California San Diego. }}
\begin{document}
\maketitle

\begin{abstract}
This paper proposes a data-driven framework to solve time-varying  
optimization problems associated with unknown linear dynamical  
systems.
Making online control decisions to regulate a dynamical system to the 
solution of an optimization problem is a central goal in 
many modern engineering applications. Yet, the available methods 
critically rely on a precise knowledge of the system dynamics, thus 
mandating a preliminary system identification phase before a 
controller can be designed. In this work, we leverage results 
from behavioral theory to show that the steady-state transfer 
function of a linear system can be computed from data samples without any knowledge or estimation 
of the system model. We then use this data-driven representation to design a 
controller, inspired by a gradient-descent optimization method, that 
regulates the system to the solution of a convex optimization problem, 
without requiring any knowledge of the time-varying disturbances 
affecting the model equation. 
Results are tailored to cost functions satisfy the Polyak-\L ojasiewicz inequality.
\end{abstract}

\section{Introduction}
Online optimization problems have attracted significant attention 
in various disciplines, including machine learning~\cite{SS:11},  
control systems~\cite{MC-ED-AB:20,LL-ZN-EM-JS:18}, and transportation 
management~\cite{GB-JC-JP-ED:21-tcns}.
When used as a controller for dynamical systems, an online 
optimization method seeks to make control decisions at every time 
instant to minimize a loss function that is time-varying and possibly 
uncertain as described by the system 
dynamics.
The vast majority of works on online optimization for dynamical 
systems make a strict assumption on the knowledge of the underlying 
system dynamics (see e.g. \cite{MC-ED-AB:20,AH-SB-GH-FD:20,menta2018stability,GB-JIP-ED:20-automatica}).
However, besides their theoretical value, maintaining and refining 
full system models is often undesirable because: (i) perfect 
knowledge of the dynamics is rarely available in practice since
it requires explicit system identification and periodic model
re-updates, and (ii) identifying a full model of the system is often 
unnecessary since most optimization-inspired controllers rely on 
simpler representations of the dynamical system. To the best of our 
knowledge, efficient and numerically reliable online optimization 
methods that bypass the model identification phase are
still  lacking.

In this work, we take a novel approach to design online optimization
controllers that relies on Willems' fundamental lemma
\cite{JCW-PR-IM-BDM:05} to construct a data-driven representations of 
the dynamical system to control, which is then used for algorithm 
synthesis.
Precisely, we assume the availability of noise-free historical 
data, i.e., finite-length trajectories produced by the open-loop 
dynamics, and we show that the steady-state transfer function of a 
linear system can be computed from non steady-state input-output data.
The noise-free assumption corresponds to scenarios where accurate 
sensors or signal-processing techniques can be utilized offline to 
recover noise-free sample trajectories (a similar approach was taken 
in~\cite{coulson2019data,LX-MT-BG-GF:21}). 
Then, we build upon such data-driven representation to 
propose a controller inspired by online optimization methods that 
regulates the dynamical  system to an equilibrium point that 
minimizes a prescribed loss function despite unknown and time-varying 
noise terms affecting the model equation.
Interestingly, our results suggest that a suitable choice of the 
controller stepsize is sufficient to guarantee asymptotic convergence 
to the desired optimizers, up to an asymptotic error that is bounded 
by the time-variability of the exogenous noise terms.

\noindent \textbf{Related Work.}
The results presented here are tied to the fields of data-driven 
control and online optimization. 
The success of data-driven control methods mainly originates from 
the possibility of synthesizing controllers without first 
identifying a full system model.
Among these methods, the behavioral framework has recently regained 
considerable attention 
\cite{JCW-PR-IM-BDM:05,CDP-PT:19}. 
Recent extensions include distributed formulations \cite{AA-JC:20}, 
combinations with model predictive control
\cite{coulson2019data,JB-JK-MAM-FA:20}, 
trajectory tracking \cite{LX-MT-BG-GF:21}, 
and nonlinear systems \cite{JB-FA:20,MG-CDP-PT:20}.
In this work, we leverage the behavioral framework to build a 
data-driven representation of a dynamical system that is the used 
for optimization purposes.
Especially relevant to this work are the recent results
\cite{MY-AI-RS:20,HJVW-MKC-MM:20,AB-CD-PT:21,LX-MT-BG-GF:21}
that focus on the presence of noise in the data.

Online optimization approaches aim to optimize loss functions that 
depends on the state of an underlying and uncertain dynamical system.
Linear time-invariant systems are considered in 
e.g.,~\cite{MC-ED-AB:20,menta2018stability,GB-JC-JP-ED:21-tcns,LL-ZN-EM-JS:18},
stable nonlinear systems in~\cite{AH-SB-GH-FD:20,DL-DF:SM:21}, and 
switched systems in \cite{GB-JIP-ED:20-automatica}. In contrast with
the above line of work, which considers continuous-time dynamics, 
the focus of this paper is on systems and controllers that operate 
at discrete time.
Although data-driven online optimization methods are studied in the 
recent work~\cite{MN-MM:21}, results are however limited to the 
absence of noise and regret~analysis.






\noindent \textbf{Contributions.}
This work features two main contributions. 
First, we show that the steady-state transfer function of a linear 
time-invariant dynamical system can be obtained from historical (non 
steady-state) input-output trajectories generated by the open-loop 
dynamics, without any knowledge or estimation of the system 
parameters.
Interestingly, our results also suggest that the steady-state transfer
function can be computed exactly from input-output data even when 
the trajectories are affected by constant noise terms.
Our work offers contributions with respect to the vast majority of the 
available literature on data-driven control, which considers 
disturbances affecting only the output equation
(see e.g. \cite{MY-AI-RS:20,AB-CD-PT:21}), by accounting for the 
presence of disturbance terms affecting the model equation.
Second, we propose a controller inspired from online optimization 
methods, which steers the dynamical system to one solution of a 
time-varying convex optimization problem. We prove convergence of the 
system with controller to the desired optimal points; precisely, we 
show input-to-state stability (ISS) of the controlled dynamical system 
with respect to exogenous disturbances affecting the dynamics.
Our results build upon the theory 
of ISS Lyapunov functions for discrete-time dynamical systems 
\cite{ZPJ-YW:01}, properly modified to guarantee stability with 
respect to compact sets of optimizers \cite{ES-YW:95}.


\section{Preliminaries}
\label{sec:2}

We first outline the notation and recall few basic concepts.

\noindent
\textbf{Notation.}
Given a symmetric matrix 
$M \in \real^{n \times n}$, $\underline \lambda(M)$ and $\bar 
\lambda(M)$ denote the smallest and largest eigenvalue of $M$, 
respectively; $M \succ 0$ indicates that $M$ is positive definite.
For vectors $u \in \real^n$ and $w \in \real^m$, 
$(x,u) \in \real^{n+m}$ denotes their concatenation.
We denote by $\norm{u}$ the Euclidean norm of $u$; $u^\top$ denotes transposition; given nonempty 
compact sets $\mc A, \mc B \subset \real^n$, 
$\vert u \vert_{\mc A} = \inf_{z \in \mc A} \norm{z-u}$ denotes the
point-to-set distance, while 
$\dist{\mc A}{\mc B} := \max \{ 
\sup_{x \in \mc A} \inf_{y \in \mc B} \norm{x-y}, 
\sup_{y \in \mc B} \inf_{x \in \mc A} \norm{x-y} \} $ denotes the 
Hausdorff distance.

A continuous function 
$\beta:\mathbb{R}_{\geq0}\times\mathbb{R}_{\geq0}\to\mathbb{R}_{\geq0}$ is of class $\mathcal{K}\mathcal{L}$ if it is strictly increasing in 
its first argument, decreasing in its second argument, 
$\lim_{r\to0^+}\beta(r,s)=0$ for each $s\in\mathbb{R}_{\geq0}$, and  
$\lim_{s\to\infty}\beta(r,s)=0$ for each $r\in\mathbb{R}_{\geq0}$. 
A continuous function $\map{\gamma}{\realpos}{\realpos}$ is of class 
$\mc{K}$ if it is strictly increasing and $\gamma(0)=0$, and   it is
of class $\mc K_\infty$ if it is of class $\mc K$ and, in addition,
$\lim_{r\rightarrow \infty} \gamma(r)=\infty$.

\noindent
\textbf{Persistency of Excitation.}
We recall some useful facts on behavioral system theory 
from~\cite{JCW-PR-IM-BDM:05}. 
For a signal  $k \mapsto z_k \in \real^\sigma$, 
$k \in \integpos$, we denote by 
$z_{[k,k+T]}$, $k \in \integ$, $T \in \integpos$, 
the vectorization of $z$ restricted to the interval $[k, k + T ]$, 
namely,
\begin{align*}
 z_{[k,k+T]} = (z_k, \dots, z_{k+T}).
\end{align*}
Given $z_{[0,T-1]}$, $t \le T$, and $q\leq T-t+1$, we let $Z_{t,q}$ 
denote the Hankel matrix of length $t$ associated with $z_{[0,T-1]}$:
\begin{align*}
Z_{t,q} = \begin{bmatrix}
z_0 & z_1 & \hdots & z_{q-1}\\
z_1 & z_2 & \hdots & z_q\\
\vdots & \vdots & \ddots & \vdots\\
z_{t-1} & z_t & \hdots & z_{q+t-2}
\end{bmatrix}
\in \real^{\sigma t \times q}.
\end{align*}
The signal $z_{[0,T-1]}$ is persistently 
exciting of order $t$ if $Z_{t,q}$ has full row rank; that is, 
$\rank(Z_{t,q}) = \sigma t$. Notice that persistency of excitation 
implicitly requires $q \geq \sigma t$ and, consequently, 
$T \geq (\sigma+1) t -1$.

The linear dynamical system
\begin{align}
\label{eq:auxDynamicalSystem}
x_{k+1} &= A x_k + B u_k, & y_k &= C x_k + D u_k, 
\end{align}
$x \in \real^{n}$, 
is controllable if 
$\mc C := [B, AB, A^2B, \dots, A^{n-1}B]$ satisfies 
$\rank(\mc C) = n$. We recall the following two properties of 
\eqref{eq:auxDynamicalSystem} when its input is persistently exciting. 


\begin{lemma}{\textit{\textbf{(Fundamental Lemma)}}
\cite[Corollary 2]{JCW-PR-IM-BDM:05}}
\label{lem:fundLemmarankHankelMatrix}
Assume \eqref{eq:auxDynamicalSystem} is controllable, let 
$(u_{[0,T-1]}, x_{[0,T-1]})$, $T \in \integ_{>0}$, be an 
input-state trajectory of \eqref{eq:plantModel}. 
If $u_{[0,T-1]}$ is persistently exciting of order $n+L$, then:
\begin{align*}
\rank \begin{bmatrix} U_{L,q}\\  X_{1,q} \end{bmatrix} 
= L m + n,
\end{align*}
where $U_{L,q}$ and $X_{1,q}$ denote the Hankel matrices associated with $u_{[0,T-1]}$ and $x_{[0,T-1]}$, respectively. \hfill $\Box$
\end{lemma}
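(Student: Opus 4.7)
The matrix $M := \begin{bmatrix} U_{L,q} \\ X_{1,q} \end{bmatrix}$ has $Lm+n$ rows, so the bound $\rank M \leq Lm+n$ is immediate. The plan is to establish the reverse inequality by showing the column span of $M$ equals $\real^{Lm+n}$. Each column of $M$ has the form $(u_j, u_{j+1}, \dots, u_{j+L-1}, x_j)$ --- the initial state and the subsequent $L$ inputs of the trajectory window starting at time $j$. By linearity of the dynamics, every linear combination of these columns is itself a valid (initial state, length-$L$ input) pair, and conversely any pair $(x_0, u_0, \dots, u_{L-1}) \in \real^{n+Lm}$ is admissible because no dynamical constraint restricts an initial state together with a free input history; thus surjectivity of the column map onto $\real^{n+Lm}$ is precisely the right target.

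The construction proceeds in two coupled steps. First, by controllability of $(A,B)$, the reachability matrix $[B, AB, \dots, A^{n-1}B]$ has full row rank $n$, so for any target state $\beta^* \in \real^n$ there exist preamble inputs $v_0, \dots, v_{n-1} \in \real^m$ steering the zero state to $\beta^*$ in exactly $n$ steps. Second, writing the target input window as $\alpha^* = (\alpha_0^*,\dots,\alpha_{L-1}^*) \in \real^{Lm}$, I invoke persistency of excitation of order $n+L$: the Hankel matrix $U_{n+L,q-n}$ has full row rank $m(n+L)$, so there exist coefficients $c' \in \real^{q-n}$ realizing the length-$(n+L)$ input $(v_0, \dots, v_{n-1}, \alpha_0^*, \dots, \alpha_{L-1}^*)$ as a column combination of $U_{n+L,q-n}$. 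The same $c'$ produces, by linearity, a virtual state trajectory emanating from the virtual initial state $\bar x_0 := \sum_j c'_j x_j$; at time $n$ this virtual state equals $A^n \bar x_0 + \beta^*$, and the subsequent length-$L$ input window matches $\alpha^*$ exactly. A window shift by $n$ then exhibits $(\alpha^*,\beta^*)$ as a combination of columns of $M$ --- provided the stray $A^n \bar x_0$ term can be cancelled.

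The residual obstacle is exactly this cancellation: $\bar x_0$ is a side-effect of the $c'$ chosen to match the input, not a free parameter. I plan to resolve it by a coupled-system argument, re-choosing the preamble $(v_i)$ as a function of $c'$ so that $\sum_{i=0}^{n-1} A^{n-1-i} B v_i = \beta^* - A^n \bar x_0$; this turns the two steps into a single linear system in $(c',v)$ whose solvability follows by combining PE (for input realizability) with controllability (for state matching). An equivalent and perhaps cleaner execution is to prove the stronger rank statement that the extended matrix $\begin{bmatrix} X_{1,q-n} \\ U_{n+L,q-n} \end{bmatrix}$ has full row rank $n + m(n+L)$, and then drop the first $nm$ input rows and re-index by the $n$-step shift to recover $\rank M \geq n + Lm$. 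In either route, the principal technical difficulty is the same: verifying that the state-row block contributes exactly $n$ genuinely new independent directions beyond those already supplied by PE. I would handle this via a Cayley--Hamilton-style argument on the state evolution $x_{j+n} = A^n x_j + [A^{n-1}B,\dots,B](u_j,\dots,u_{j+n-1})$, using controllability to show that the $X$-rows cannot be absorbed into the span of the $U$-rows without violating full-rankness of the reachability matrix.
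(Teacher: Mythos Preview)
The paper does not supply a proof of this lemma; it is quoted as \cite[Corollary~2]{JCW-PR-IM-BDM:05} and the statement simply closes with $\square$. So there is no in-paper argument to compare against, and I assess your plan on its own.

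Your coupled-system resolution is circular. Eliminate $v$ via $v = U_{n+L,q-n}^{(\text{top})}c'$ and substitute into the state-matching constraint $\mathcal{C}_n v + A^n X_{1,q-n}c' = \beta^*$: the $k$-th column of the left side is $\mathcal{C}_n(u_k,\dots,u_{k+n-1}) + A^n x_k = x_{k+n}$. Paired with $U_{n+L,q-n}^{(\text{bot})}c' = \alpha^*$, whose $k$-th column is $(u_{k+n},\dots,u_{k+n+L-1})$, the coupled system collapses \emph{exactly} to
\[
\begin{bmatrix} U_{L,q} \\ X_{1,q} \end{bmatrix}_{[:,\,n:q-1]} c' = \begin{bmatrix} \alpha^* \\ \beta^* \end{bmatrix},
\]
i.e., surjectivity of a column-submatrix of $M$, which is no easier than the original claim. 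The phrase ``combining PE with controllability'' hides the difficulty: PE controls the $U$-rows and controllability controls $\mathcal{C}_n$, but neither hypothesis says anything about $X_{1,q-n}$, and that is precisely where the coupling through $\bar x_0$ lives. Your alternative target $\rank\!\begin{bmatrix} X_{1,q-n}\\ U_{n+L,q-n}\end{bmatrix}=n+m(n+L)$ is not a free consequence of the lemma at $L'=n+L$ (that would need PE of order $2n+L$), and the closing Cayley--Hamilton remark does not explain why the $x_j$ necessarily span $\real^n$---a trajectory could in principle be confined to a proper $A$-invariant subspace, and ruling this out is exactly the heart of the matter. The standard proof (Willems et al., or the state-space version by van~Waarde et al.) works dually: assume $\eta^\tsp X_{1,q}+\xi^\tsp U_{L,q}=0$, iterate the dynamics to write $\eta^\tsp A^k x_j$ in terms of inputs, take the characteristic-polynomial combination to annihilate the state via Cayley--Hamilton, obtain a pure relation on $(u_j,\dots,u_{j+n+L-1})$, and invoke PE of order $n+L$ to force all coefficients to vanish; controllability then yields $\eta=0$ from $\eta^\tsp A^k B=0$. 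That left-nullspace route supplies the missing ingredient your surjectivity construction cannot manufacture.
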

\smallskip

\begin{lemma}{\bf \textit{(Data Characterizes Full Behavior \cite[Theorem 1]{JCW-PR-IM-BDM:05}})}
\label{lem:fundLemmaExistenceg}
Assume \eqref{eq:auxDynamicalSystem} is controllable and observable, 
let $(u_{[0,T-1]}, y_{[0,T-1]})$, $T \in \integ_{>0}$, be an 
input-output trajectory generated by \eqref{eq:plantModel}. 
If $u_{[0,T-1]}$ is persistently exciting of order $n+L$, then
any pair of $L$-long signals 
$(\tilde u_{[0,L-1]}, \tilde y_{[0,L-1]})$ is an input-output 
trajectory of \eqref{eq:plantModel} if and only if there  exists
$\alpha \in \real^q$ such that:
\begin{align*}
\begin{bmatrix}\tilde u_{[0,L-1]}\\ \tilde y_{[0,L-1]}\end{bmatrix}
= 
\begin{bmatrix} U_{L,q} \\ Y_{L,q}\end{bmatrix}
\alpha,
\end{align*}
where $U_{L,q}$ and $Y_{L,q}$ denote the Hankel matrices associated 
with $u_{[0,T-1]}$ and $y_{[0,T-1]}$, respectively. \hfill $\Box$
\end{lemma}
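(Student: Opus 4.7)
The plan is to prove both implications of the biconditional, using Lemma \ref{lem:fundLemmarankHankelMatrix} as the structural engine. Since \eqref{eq:auxDynamicalSystem} is linear and time-invariant, every $L$-long input-output pair produced by the plant is uniquely determined by the pair (input signal, initial state). Lemma \ref{lem:fundLemmarankHankelMatrix} guarantees that the stacked Hankel matrix $[U_{L,q}^\top\ X_{1,q}^\top]^\top$ has full row rank $Lm+n$, so its columns span the whole space of admissible (input, initial-state) pairs. This is the fact that will allow any target trajectory to be expressed as a linear combination of the data.

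For the sufficiency (``if'') direction, I would observe that each column $j$ of the Hankel matrix $[U_{L,q}^\top\ Y_{L,q}^\top]^\top$ is, by construction, an $L$-long input-output trajectory of \eqref{eq:auxDynamicalSystem} starting from the initial state $x^{(j)}$ given by the $j$-th column of $X_{1,q}$. By linearity and superposition, any linear combination with coefficients $\alpha \in \real^q$ is itself a valid trajectory: the input $U_{L,q}\alpha$ applied from the initial state $X_{1,q}\alpha$ produces the output $Y_{L,q}\alpha$.

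For the necessity (``only if'') direction, let $(\tilde u_{[0,L-1]}, \tilde y_{[0,L-1]})$ be an arbitrary $L$-long input-output trajectory, and let $\tilde x_0$ be an initial state that generates it (which exists by definition of trajectory). Since $[U_{L,q}^\top\ X_{1,q}^\top]^\top$ has full row rank $Lm+n$, the linear system $U_{L,q}\alpha = \tilde u_{[0,L-1]}$, $X_{1,q}\alpha = \tilde x_0$ admits a solution $\alpha \in \real^q$. Applying the sufficiency direction to this $\alpha$, the signal $Y_{L,q}\alpha$ is the output generated by input $\tilde u_{[0,L-1]}$ from initial state $\tilde x_0$, and uniqueness of LTI trajectories forces $Y_{L,q}\alpha = \tilde y_{[0,L-1]}$, completing the argument.

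The main obstacle is that the statement is phrased purely in input-output terms, yet the natural proof passes through the hidden state. Lemma \ref{lem:fundLemmarankHankelMatrix} does most of the heavy lifting by converting persistency of excitation of order $n+L$ into a rank guarantee on the (input, initial-state) Hankel matrix, which is precisely what allows any admissible initial condition $\tilde x_0$ to be hit by a column combination of $X_{1,q}$. Observability plays a supporting role in the final uniqueness step, ensuring that the state-space representation is consistent with the input-output description so that the identification $Y_{L,q}\alpha = \tilde y_{[0,L-1]}$ is unambiguous.
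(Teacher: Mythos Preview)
The paper does not prove this lemma; it is quoted as a preliminary result from \cite[Theorem~1]{JCW-PR-IM-BDM:05} and closed with a $\Box$ without argument. So there is no ``paper's own proof'' to compare against.

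Your proposal is correct and is essentially the standard proof of Willems' fundamental lemma: sufficiency by linearity and superposition, necessity by using the full row rank of $[U_{L,q}^\top\ X_{1,q}^\top]^\top$ from Lemma~\ref{lem:fundLemmarankHankelMatrix} to match any prescribed pair $(\tilde u_{[0,L-1]},\tilde x_0)$, and then invoking uniqueness of state-space trajectories to force $Y_{L,q}\alpha=\tilde y_{[0,L-1]}$. One small imprecision: the final uniqueness step does not actually require observability, since fixing the input and the initial state already determines the output for any LTI state-space system. Observability enters more subtly, ensuring that $n$ is the true order of the input-output behavior so that the persistency-of-excitation order $n+L$ in Lemma~\ref{lem:fundLemmarankHankelMatrix} is sharp; without it the argument still goes through, just with a possibly inflated~$n$.
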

\smallskip

In words, persistently exciting signals generate output trajectories
that can be used to express any other trajectory.

\section{Problem Formulation}
\label{sec:3}
We consider discrete-time linear time-invariant dynamical systems 
subject to state noise, described by:
\begin{align}
\label{eq:plantModel}
x_{k+1} &=  A x_{k} + B u_{k} + E w_{k}, & 
y_k &= C x_{k},
\end{align}
where $k \in \integpos$ is the time index, $x_k \in \real^n$ is the 
state,  $u_k \in \real^m$ describes the control
decision, $w_k \in \real^r$ denotes an unknown exogenous input 
or disturbance, which is assumed to be bounded at all times, 
$y_k \in \real^n$ is the measurable output, 
and $A, B, C, E$ are matrices of suitable dimensions.
We assume that any equilibrium point of~\eqref{eq:plantModel} is
asymptotically stable, as formalized next.

\begin{assumption}\label{ass:stabilityPlant}{\bf\textit{(Stability of Plant)}}
The matrix $A$ is Schur stable, i.e., for any $Q\succ 0$ there exists
$P \succ 0$ such that $A^\tsp P A - P = -Q$.
Moreover, \eqref{eq:plantModel} is controllable and the columns of $C$
are linearly independent.
\QEDB
\end{assumption}

%
\begin{remark}\label{re:C-full-column-rank}{\bf \textit{(Linear Independence Columns of $C$)}}
Linear-independence of the columns of $C$ 
implies that the state of \eqref{eq:plantModel} can be fully 
determined at every time given the system output. 
This assumption can be relaxed, as shown in \cite{GB-MV-JC-ED:21-tac}.
\QEDB
\end{remark}


Under Assumption \ref{ass:stabilityPlant}, for any fixed 
$u \in \real^m$ and $w \in \real^r$, the system 
\eqref{eq:plantModel} admits a well-posed steady-state 
input-output relationship, given by:
\begin{align}
\label{eq:steady-state}
y = \underbrace{C(I-A)^\inv B}_{:= G} u
+ \underbrace{C(I-A)^\inv E}_{:=H } w.
\end{align}

We focus on cases where the matrices $(A,B,C,E)$ are unknown, and we 
consider the problem of regulating~\eqref{eq:plantModel} to an 
equilibrium point that is specified as the solution of the following 
optimization problem:
\begin{align}
\label{opt:objectiveproblem}
u_k^* \in \arg \min_{\bar u} \;\;\; 
& \phi(\bar u) + \psi(G \bar u + H w_k),
\end{align}
where $\map{\phi}{\real^m}{\real}$ and
$\map{\psi}{\real^p}{\real}$ are given cost functions describing 
losses associated withe the system input and output, respectively. 
We note that, at every time $k$, the value of the objective function
in \eqref{opt:objectiveproblem} is unknown and time-varying, since it 
depends on the unknown matrices $(A,B,C,E)$ and on the unknown and 
time-varying disturbance $w_k$.
The optimization problem \eqref{opt:objectiveproblem} formalizes an 
optimal regulation problem, where the goal is to regulate 
\eqref{eq:plantModel} to an optimal equilibrium point, as described by
the cost in \eqref{opt:objectiveproblem}.

We make the following assumption on the costs 
in~\eqref{opt:objectiveproblem}.


\begin{assumption}\label{ass:lipschitzAndConvexity}{\bf\textit{(Lipschitz Smoothness and PL)}}
The functions $u \mapsto \phi(u)$ and $y \mapsto \psi(y)$ are 
differentiable and have Lipschitz-continuous gradients with constants
$\ell_\phi$, $\ell_{\psi}$, respectively.
Moreover, $f(u):=\phi(u) + \psi(Gu+Hw)$ is radially-unbounded, has a 
nonempty set of minimizers, and satisfies the Polyak-\L ojasiewicz 
(PL) inequality, i.e., there exists $\mu>0$ such that 
$\frac{1}{2} \norm{\nabla f(u)}^2 \geq \mu ( f(u) - f(u^*))$, 
for all $u\in \real^m$ and all minimizers~$u^*$.
\QEDB
\end{assumption}


Lipschitz-continuity assumptions are commonly used for the study of 
first-order optimization methods (see e.g.~\cite{SB-LV:04}).
The PL inequality allows us to guarantee that every critical point of 
\eqref{opt:objectiveproblem} is a global minimizer.
Also, we note that the PL condition is a weaker assumption than strong 
convexity (in particular, it implies invexity), and has been 
widely adopted to study convergence of optimization  
algorithms~\cite{SB-LV:04}.

We seek to synthesize a controller that does not require any prior 
knowledge of the matrices $(A,B,C,E)$ as well as of the disturbance 
$w_k$, with the following structure:
\begin{align}
\label{eq:controllerFc}
u_{k+1} = \sbs{F}{c}(u_k, y_k),
\end{align}
that guarantees that \eqref{eq:plantModel} tracks the optimizers of 
the optimization problem~\eqref{opt:objectiveproblem} (see 
Fig.~\ref{fig:interconnectionBlocks} for an illustration). 
Two important observations are in order. 
First, because the exogenous input $w_k$ is  time-varying, 
the optimizers of \eqref{opt:objectiveproblem} are also time-varying.
Formally, by denoting by 
$\mc{U}_k^*:= \setdef{u_k^*}{ 0= \nabla \phi(u_k^*) 
+ \nabla \psi(Gu_k^* + Hw_k)}$ the set of optimizers of 
\eqref{opt:objectiveproblem} at time $k$, in general we have that 
$\mc U_{k+1}^* \neq \mc U_{k}^*$.
Second, because we assume no prior knowledge on $w_k$, any controller 
of the form \eqref{eq:controllerFc} 
can track the solutions of \eqref{opt:objectiveproblem} up to an error 
that depends on the time-variability of $w_{k+1}-w_k$.
For these reasons, we aim at guaranteeing that the output of the 
system \eqref{eq:plantModel} satisfies a tracking bound of the form:
\begin{align}
& \hspace{-.2cm} \distA{\xi_k -\xi^*_k} \leq 
\beta (|\xi_{k_0} - \xi^*_{k_0}|_{\mathcal{U}^*_{k_0}}, k-k_0) \nonumber \\
& \hspace{.2cm}
+ \gamma_u (\sup_{ t \geq k_0} \dist{\mc U_{t+1}^*}{\mc U_{t}^*})
+ \gamma_w (\sup_{ t \geq k_0} \norm{w_{t+1} - w_t}) \label{eq:KLbound}
\end{align}
for all $0 \leq k_0 \leq k$, where
$\xi_k := (x_k,u_k)$ denotes the joint system-controller state,  
$\xi^*_k := ((I-A)^\inv(B u_k^*+Ew_k),u^*_k)$ denotes 
the optimizer of \eqref{opt:objectiveproblem} at time $k$, $\beta$
is a class-$\mc{KL}$ function, $\gamma_u$, $\gamma_w$
are class-$\mc K$ functions, and 
$\dist{\mc U_{t+1}^*}{\mc U_{t}^*}$ denotes the Hausdorff 
distance~\cite{RR-RW:09} between the compact sets $\mc U_{t+1}^*$ and 
$\mc U_{t}^*$. 
Observe that, because the cost function is radially-unbounded (see 
Assumption~\ref{ass:lipschitzAndConvexity}), the set of optimizers of 
\eqref{opt:objectiveproblem} is compact and thus the Hausdorff 
distance $\dist{\mc U_{t+1}^*}{\mc U_{t}^*}$ is finite for all 
$k$~\cite{RR-RW:09}.

\begin{remark}{\bf\textit{(Time-Varying Optimizers)}}
We notice that although $\dist{\mc U_{t+1}^*}{\mc U_{t}^*}$ 
may be reconducted to the time-variability of $w_{k}$,
in general \eqref{eq:KLbound}  reflects more accurately the role of 
individual error terms, since the precise relationship between 
$w_{k+1}-w_k$ and $\dist{\mc U_{t+1}^*}{\mc U_{t}^*}$ is unknown 
for general cases.
Furthermore, in cases where the loss $\mc \phi(u)$ and 
$\psi(y)$ are time-varying, the term 
$\dist{\mc U_{t+1}^*}{\mc U_{t}^*}$ can be used to account for the
time-variability of the optimizers.~
\QEDB
\end{remark}

\begin{figure}[t]
\centering
\includegraphics[width=.7\columnwidth]{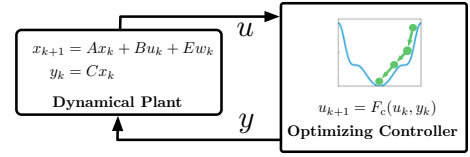}
\caption{Online gradient-flow optimizer used as an output feedback
controller for unknown LTI systems subject to time-varying 
disturbances.}
\label{fig:interconnectionBlocks}
\vspace{-.4cm}
\end{figure}

\section{Data-Driven Method for Online Optimization}
\label{sec:4}

To track the optimizers of \eqref{opt:objectiveproblem}, we propose
the following controller inspired from an online gradient
method:
\begin{align}
\label{eq:gradientFlowController}
x_{k+1} &=  A x_k + B u_k + E w_k, ~~ y_k = C x_k, \nonumber\\
u_{k+1} &= u_k -\eta (\nabla \phi(u_k) + G^\tsp \nabla \psi(y_k)),
\end{align}
where $\eta \in \real_{>0}$ is a tunable controller parameter
(see Fig.~\ref{fig:interconnectionBlocks}).
We note that the controller in~\eqref{eq:gradientFlowController} does 
not rely on any knowledge of the system matrices $(A,B,C,E)$, 
instead, it requires an exact expression for the map $G$. 
In order to implement \eqref{eq:gradientFlowController}, we propose a 
two-phase control method, where data samples are used to determine 
$G$, and then the feedback controller 
\eqref{eq:gradientFlowController} is used to track the optimizers 
of~\eqref{opt:objectiveproblem}.  
Fig.~\ref{fig:historical_control_phase} illustrates the two phases.
Similar approaches using data recorded offline were proposed 
in~\cite{LX-MT-BG-GF:21} for output tracking as 
well as in~\cite{coulson2019data} for model predictive~control.

\subsection{Data-Driven Characterization of the Transfer Function}
The following result shows that, when $w_k=0$ at all times, the 
steady-state transfer function $G$ can be computed from a
(non-steady-state) sample trajectory of the system.


\begin{theorem}{\bf \textit{(Data-Driven Characterization of 
Transfer Function in the Absence of Noise)}}
\label{thm:data-drivenTransferFunction}
Let Assumption \ref{ass:stabilityPlant} hold, let 
$u_{[0,T-1]}$ be persistently exciting of order $n+1$
and assume $W_{1,T}=0$.
Then, there exists $M \in \real^{q \times m}$ such that:
\begin{align}
\label{eq:definionMatrixM}
\sps{Y}{diff}_{1,T}M=0, \qquad
U_{1,T}M=I, 
\end{align}
where $\sps{Y}{diff}_{1,T} = [y_1- y_0, ~ y_2-y_1,~ 
\dots ~ y_T-y_{T-1}].$
Moreover, for any $M$ that satisfies \eqref{eq:definionMatrixM}, 
the steady-state transfer function of \eqref{eq:plantModel}
equals $G=  Y_{1,T} M$.  \hfill $\Box$
\end{theorem}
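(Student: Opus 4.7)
The plan is to exploit the observation that the difference condition $\sps{Y}{diff}_{1,T}M = 0$ forces each column of $\begin{bmatrix} U_{1,T} \\ X_{1,T} \end{bmatrix} M$ to be an equilibrium pair of the noise-free dynamics, and then to use Lemma \ref{lem:fundLemmarankHankelMatrix} to guarantee that every input direction in $\real^m$ is realizable as such an equilibrium combination. I would begin by expressing $\sps{Y}{diff}_{1,T}$ in terms of the state and input Hankel matrices: since $w=0$ and $y_{k+1}-y_k = C(A-I)x_k + CBu_k$, one obtains
\begin{align*}
\begin{bmatrix} U_{1,T} \\ \sps{Y}{diff}_{1,T} \end{bmatrix}
=
\begin{bmatrix} I & 0 \\ CB & C(A-I) \end{bmatrix}
\begin{bmatrix} U_{1,T} \\ X_{1,T} \end{bmatrix}.
\end{align*}

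Next, I would prove a rigidity property: for any $\alpha$ with $\sps{Y}{diff}_{1,T}\alpha=0$, the combination $(\tilde u, \tilde x):=(U_{1,T}\alpha, X_{1,T}\alpha)$ is an equilibrium of the noise-free dynamics. Indeed, the factorization above gives $C[(A-I)\tilde x + B\tilde u] = 0$; the full column rank of $C$ (Assumption \ref{ass:stabilityPlant}) collapses this to $(A-I)\tilde x + B\tilde u = 0$, and Schur stability of $A$ yields $\tilde x = (I-A)^{\inv}B\tilde u$. Hence $Y_{1,T}\alpha = C\tilde x = G\, U_{1,T}\alpha$, which, applied columnwise to $M$, delivers $Y_{1,T}M = G\, U_{1,T}M = G$ for any admissible $M$ and thereby proves the second claim. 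For existence, Lemma \ref{lem:fundLemmarankHankelMatrix} ensures $\begin{bmatrix} U_{1,T} \\ X_{1,T} \end{bmatrix}$ has full row rank $m+n$, so for each standard basis vector $e_i \in \real^m$ there exists $\alpha_i$ with $U_{1,T}\alpha_i = e_i$ and $X_{1,T}\alpha_i = (I-A)^{\inv}Be_i$. A direct substitution shows $\sps{Y}{diff}_{1,T}\alpha_i = CBe_i + C(A-I)(I-A)^{\inv}Be_i = 0$, and stacking $M := [\alpha_1, \dots, \alpha_m]$ produces the required matrix.

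The main obstacle I anticipate is the rigidity step, where both structural hypotheses in Assumption \ref{ass:stabilityPlant} enter in a coupled way: the full column rank of $C$ is what collapses the kernel condition on output differences into an equality on the underlying states, while the Schur stability of $A$ is what supplies the inverse $(I-A)^{\inv}$ that makes the equilibrium map well defined. Without the rank assumption on $C$, one would only recover $C\tilde x = G\, U_{1,T}\alpha$, which is not enough to pin down $Y_{1,T}M$ uniformly across admissible choices of $M$, and the uniqueness portion of the theorem would fail.
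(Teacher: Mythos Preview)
Your argument is correct. The factorization $\sps{Y}{diff}_{1,T} = C(A-I)X_{1,T} + CB\,U_{1,T}$ is exactly right, the rigidity step uses the full-column-rank assumption on $C$ and the invertibility of $I-A$ precisely where they are needed, and your existence construction via Lemma~\ref{lem:fundLemmarankHankelMatrix} with $L=1$ is clean: the persistence-of-excitation order $n+1$ matches the hypothesis, and the verification $\sps{Y}{diff}_{1,T}\alpha_i = C(A-I)(I-A)^{\inv}Be_i + CBe_i = 0$ closes the loop.

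As for comparison with the paper's proof: the paper does not actually contain one. It defers the argument to the companion reference~\cite{GB-MV-JC-ED:21-tac}, so there is nothing here to compare against directly. That said, the route you took---reducing the output-difference constraint to an equilibrium constraint on $(U_{1,T}\alpha, X_{1,T}\alpha)$ and then invoking the Fundamental Lemma for surjectivity---is the natural one given the tools the paper has set up in Section~\ref{sec:2}, and it is self-contained. Your closing remark about the role of the rank hypothesis on $C$ is also on point and matches the spirit of Remark~\ref{re:C-full-column-rank}.

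One minor presentational suggestion: in the existence part, rather than constructing $M$ column by column, you could note directly that the full-row-rank of $\begin{bmatrix} U_{1,T} \\ X_{1,T} \end{bmatrix}$ means the linear map $\alpha \mapsto (U_{1,T}\alpha, X_{1,T}\alpha)$ is onto $\real^{m+n}$, so in particular the target $(I_m, (I-A)^{\inv}B)$ is hit by some $M$; this is equivalent to what you wrote but slightly more compact.
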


The proof is presented in \cite{GB-MV-JC-ED:21-tac}.
Theorem~\ref{thm:data-drivenTransferFunction} asserts that $G$ can be 
computed from sample data originated from~\eqref{eq:plantModel}, by 
solving the set of linear equations~\eqref{eq:definionMatrixM}.
Two important observations are in order. 
First, the matrix $M$ that satisfies~\eqref{eq:definionMatrixM} 
depends on the sample data $u_{[0,T-1]}$. Second, given a sample 
sequence $u_{[0,T-1]}$, in general, 
there exists an infinite number of choices of $M$ that satisfy 
\eqref{eq:definionMatrixM}.
Despite  $M$ not being unique, 
Theorem~\ref{thm:data-drivenTransferFunction} shows that $Y_{1,q} M$ 
is unique and independent of the choice of $u_{[0,T-1]}$ used to 
generate the data.
It is worth noting that Theorem~\ref{thm:data-drivenTransferFunction} 
requires one to collect $T$ samples of the (persistently exciting) 
input sequence $u_{[0,T-1]}$, and $T+1$ samples of the associated 
output sequence $y_{[0,T]}$.

\begin{figure}[t]
\centering
\includegraphics[width=.9\columnwidth]{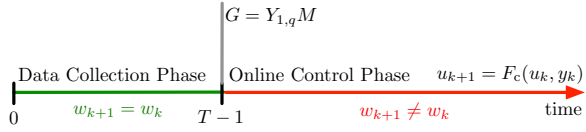}
\caption{Two-phase control method where noise-free historical data is 
used to compute the map $G$, and then a dynamical controller is used 
to track the optimizers of \eqref{opt:objectiveproblem} despite 
unknown and time-varying disturbances.}
\label{fig:historical_control_phase}
\end{figure}

Theorem \ref{thm:data-drivenTransferFunction} assumes the availability
of a finite-length trajectory produced by the open-loop system 
\eqref{eq:plantModel} in the absence of exogenous disturbance $w_k$. 
When $w_k$ is non-zero but constant at all times, Theorem 
\ref{thm:data-drivenTransferFunction} can still be used to determine 
$G$, as described next. For all $k \in \integpos$, define
\begin{align}
\label{eq:systemDifferences}
d_k := x_{k+1}-x_k, 
r_k := y_{k+1}-y_k,
v_k := u_{k+1}-u_k.
\end{align}
By substituting into \eqref{eq:plantModel}, the new variables satisfy:
\begin{align}
\label{eq:update-d}
d_{k+1} &= A d_k + B v_k,& r_k &= C d_k.
\end{align}
By leveraging \eqref{eq:update-d}, 
Theorem~\ref{thm:data-drivenTransferFunction} can be used to determine
$G$ when the sample  data is affected by constant disturbance.


\begin{corollary}{\bf \textit{(Data-Driven Characterization of 
Transfer Function With Constant Noise)}}
\label{cor:constantNoiseG}
Let Assumption \ref{ass:stabilityPlant} hold, assume
$u_{[0,T]}$ is persistently exciting of order $n+1$. If 
$w_k=w \in \real^r$ is fixed for all $k \in \integpos$, 
then the steady-state transfer function of 
\eqref{eq:plantModel} equals $G=  R_{1,q} M$, where 
\begin{align}
\label{eq:definionMatrixM-noisy}
\sps{R}{diff}_{1,q}M=0, \qquad
V_{1,q}M =I, 
\end{align}
$\sps{R}{diff}_{1,q} = [r_1- r_0, ~ r_2-r_1,~ 
\dots ~ r_q-r_{q-1}]$, and $q = T+1$. \hfill $\Box$
\end{corollary}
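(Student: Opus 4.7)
The plan is to reduce the constant-noise case directly to Theorem~\ref{thm:data-drivenTransferFunction} by applying it to the differenced trajectory $(v_{[0, q-1]}, r_{[0, q-1]})$ defined in~\eqref{eq:systemDifferences}. First, I would verify that the constant disturbance cancels out exactly in the differenced dynamics: substituting $w_{k+1} = w_k = w$ into~\eqref{eq:plantModel} and subtracting consecutive time instances yields $x_{k+2}-x_{k+1} = A(x_{k+1}-x_k) + B(u_{k+1}-u_k)$, with $y_{k+1}-y_k = C(x_{k+1}-x_k)$, i.e., exactly the LTI system~\eqref{eq:update-d} with no exogenous input. Because~\eqref{eq:update-d} shares the matrices $(A,B,C)$ with~\eqref{eq:plantModel}, its steady-state input-output map is again $G = C(I-A)^{-1}B$.

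Next, I would check that the hypotheses required to invoke Theorem~\ref{thm:data-drivenTransferFunction} on~\eqref{eq:update-d} are met: (i) Assumption~\ref{ass:stabilityPlant} transfers verbatim since $(A,B,C)$ are unchanged; (ii) the collected pair $(v_{[0,q-1]}, r_{[0,q-1]})$ is, by construction, an input-output trajectory of~\eqref{eq:update-d} with initial state $d_0 = x_1-x_0$; and (iii) the noise sequence for~\eqref{eq:update-d} is identically zero. Provided that $v_{[0,q-1]}$ is persistently exciting of order $n+1$, Theorem~\ref{thm:data-drivenTransferFunction} applied to~\eqref{eq:update-d} yields existence of $M$ with $\sps{R}{diff}_{1,q}M = 0$, $V_{1,q}M = I$, and the identity $G = R_{1,q}M$, which is precisely the claim of the corollary.

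The main obstacle I anticipate is establishing persistency of excitation of $v_{[0,q-1]}$ from persistency of excitation of $u_{[0,T]}$. The Hankel matrix $V_{t,q}$ can be written as the blockwise difference $[-I_{tm},\, I_{tm}]$ applied rowwise to $U_{t+1,q}$, and the kernel of this differencing operator contains precisely the blocks with constant stacking pattern. Hence PE of $u$ of order $n+1$ does not mechanically imply PE of $v$ of order $n+1$: one must argue either through PE of $u$ of slightly higher order, through an enlargement of the data window by one sample (consistent with the specification $q = T+1$ in the statement, which suggests that $u$ is collected over $T+2$ samples so that $T+1$ differences are available), or by exploiting controllability together with the structure of the kernel to recover the rank loss. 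Aside from this technical step, the corollary follows as a direct application of Theorem~\ref{thm:data-drivenTransferFunction} to the differenced system.
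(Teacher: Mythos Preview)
Your approach is exactly the one the paper takes: the paper does not give a separate proof of the corollary but simply observes that the differenced variables satisfy the noise-free system~\eqref{eq:update-d} with the same $(A,B,C)$, and then invokes Theorem~\ref{thm:data-drivenTransferFunction} on that system. Your reduction is correct and matches this.

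The persistency-of-excitation issue you flag is a genuine subtlety that the paper glosses over as well: it assumes $u_{[0,T]}$ is PE of order $n+1$ and tacitly treats this as sufficient for $v_{[0,q-1]}$ to be PE of order $n+1$, without justification. As you note, the differencing map has a nontrivial kernel, so this implication does not hold in general (e.g., an affine-in-$k$ input can be PE while its first difference is constant). The paper's statement should really either hypothesize PE of $v$ directly, or require PE of $u$ of one order higher; your instinct to look for the extra sample via $q=T+1$ is reasonable but does not by itself close the gap.
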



Corollary \ref{cor:constantNoiseG} provides a direct way to compute 
the transfer function $G$ when the sample data is affected by 
constant noise. 
Notice that, the Hankel matrices $R_{1,q}, V_{1,q}$, and
$\sps{R}{diff}_{1,q}$ can be computed directly from an input-output 
trajectory of \eqref{eq:plantModel} by preprocessing the data samples 
as described by  \eqref{eq:systemDifferences}.

\subsection{Convergence to First-Order Optimizers}
We now turn our attention to the online control phase, where the 
map $G$ computed according to 
Theorem~\ref{thm:data-drivenTransferFunction} is used in the 
gradient-based controller in~\eqref{eq:gradientFlowController}. 
The following result guarantees global convergence to the set of 
optimizers of \eqref{opt:objectiveproblem} under a suitable choice of 
the stepsize~$\eta$. 


\begin{theorem}\label{thm:convergence}{\bf \textit{(Tracking of Optimal Solutions)}}
Let Assumptions~\ref{ass:lipschitzAndConvexity}-\ref{ass:stabilityPlant} hold, 
let $\ell = \ell_\phi + \norm{G}^2 \ell_\psi$, and assume that $Q$ satisfies
$\underline \lambda (\Qly) > \frac{a}{\epsilon ( 1 -\epsilon)}$, where $a = \frac{1}{2} \ell_\psi^2 \norm{C}^2\norm{G}^2$ and $\epsilon \in (0,1)$ is a fixed parameter. Moreover, let
\begin{align*}
\eta^* &:= \frac{1-\epsilon}{\ell/2 + b}, &
b :=  \frac{2 \norm{A^\tsp \Ply \bar G}^2}{\epsilon 
\underline \lambda(\Qly)} + \norm{\bar G^\tsp \Ply \bar G},
\end{align*}
where $\bar G :=(I-A)^\inv B$. Then, for every $\eta < \eta^*$ 
there exists a class-$\mc{KL}$ function $\beta$ and class-$\mc K$ 
functions $\gamma_u$, $\gamma_w$ such that all solutions of \eqref{eq:gradientFlowController} satisfy 
\eqref{eq:KLbound}. \QEDB
\end{theorem}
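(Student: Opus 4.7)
The plan is to construct a composite ISS Lyapunov function for the joint plant--controller state $\xi_k = (x_k,u_k)$ and then to invoke the discrete-time ISS characterization of \cite{ZPJ-YW:01}, specialized to stability with respect to the time-varying compact target $\mathcal U_k^*$ in the spirit of \cite{ES-YW:95}.

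First, I would pass to the error coordinate $\tilde x_k := x_k - \bar G u_k - (I-A)^\inv E w_k$, which measures deviation from the quasi-steady state associated with the current $(u_k,w_k)$. Using the identity $A(I-A)^\inv + I = (I-A)^\inv$, a direct computation yields
\begin{align*}
\tilde x_{k+1} = A \tilde x_k + \eta \bar G h_k - (I-A)^\inv E (w_{k+1}-w_k),
\end{align*}
where $h_k := \nabla\phi(u_k) + G^\tsp \nabla\psi(y_k)$ is the controller step direction. Since $y_k = G u_k + H w_k + C \tilde x_k$, the $\ell_\psi$-Lipschitz bound on $\nabla \psi$ gives $h_k = \nabla f(u_k) + e_k$ with $\norm{e_k}^2 \le 2a\,\norm{\tilde x_k}^2$, so the controller is effectively a gradient step whose inexactness is proportional to the plant error and vanishes on the steady-state manifold.

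Next, I would analyze
\begin{align*}
V(\xi_k) := \tilde x_k^\tsp \Ply \tilde x_k + \bigl(f(u_k) - f(u_k^*)\bigr).
\end{align*}
Expanding the plant part via $A^\tsp \Ply A - \Ply = -\Qly$ and splitting the cross term $2\eta\,\tilde x_k^\tsp A^\tsp \Ply \bar G h_k$ by Young's inequality produces a coefficient on $\eta^2\norm{h_k}^2$ equal to the constant $b$ of the statement, together with a $\norm{\tilde x_k}^2$-coefficient proportional to $-(1-\epsilon)\underline\lambda(\Qly)$. The optimization part is handled by the $\ell$-smooth descent inequality applied to $f$ with $\ell = \ell_\phi + \norm{G}^2\ell_\psi$; substituting $h_k = \nabla f(u_k) + e_k$ and applying Young's inequality to the inexactness cross term produces an additional $\tfrac{a}{\epsilon(1-\epsilon)}\norm{\tilde x_k}^2$-type perturbation, which the hypothesis $\underline\lambda(\Qly) > a/(\epsilon(1-\epsilon))$ allows to be absorbed by the plant decrease. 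Simultaneously, $\eta < \eta^* = (1-\epsilon)/(\ell/2 + b)$ makes the aggregate coefficient of $\norm{h_k}^2$ strictly negative and, through the PL inequality $\tfrac12\norm{\nabla f(u_k)}^2 \ge \mu(f(u_k)-f(u_k^*))$, delivers an exponential decay rate for the optimization part of $V$.

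The reference shifts induced by $w_k \to w_{k+1}$ and $u_k^* \to u_{k+1}^*$ contribute additive perturbations bounded by class-$\mathcal K$ functions of $\norm{w_{k+1}-w_k}$ and $\dist{\mathcal U_{k+1}^*}{\mathcal U_k^*}$, respectively; combining the two Lyapunov increments produces $V(\xi_{k+1}) \le (1-\rho)V(\xi_k) + \chi_w(\norm{w_{k+1}-w_k}) + \chi_u(\dist{\mathcal U_{k+1}^*}{\mathcal U_k^*})$ for some $\rho\in(0,1)$. Since $V$ is sandwiched between two class-$\mathcal K_\infty$ functions of $\distA{\xi_k - \xi_k^*}$ thanks to $\Ply\succ 0$, radial unboundedness of $f$, and the PL inequality, the estimate \eqref{eq:KLbound} then follows from the set-ISS comparison argument of \cite{ZPJ-YW:01,ES-YW:95}. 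The main obstacle will be the careful bookkeeping required to verify that the aggregate quadratic form in $(\norm{\tilde x_k},\norm{\nabla f(u_k)})$ coming from the sum of the two increments is negative definite under precisely the stated conditions on $\eta$ and $\Qly$; a secondary technical subtlety is expressing the reference-shift perturbations purely in terms of the Hausdorff distance and invoking the set-ISS framework for the time-varying compact target $\mathcal U_k^*$.
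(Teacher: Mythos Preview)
Your proposal is correct and follows essentially the same route as the paper: the same error coordinate $\tilde x_k = x_k - \bar G u_k - (I-A)^{-1}E w_k$, the same composite Lyapunov candidate built from $\tilde x_k^\tsp P\tilde x_k$ and the optimality gap $f(u_k)-f(u_k^*)$, the descent lemma for the controller part, the Lyapunov equation $A^\tsp P A-P=-Q$ for the plant part, and Young-type splittings that produce exactly the constants $a$ and $b$ of the statement before invoking the set-ISS characterization. The only cosmetic differences are that the paper scales the optimization component by $1/\eta$ (so the decrement in $\|h_k\|^2$ appears with coefficient independent of $\eta$), and the paper phrases the decrease as a generic $\alpha_3$-dissipation inequality rather than the explicit contraction $(1-\rho)V$ you claim via PL; both packagings lead to the same ISS conclusion.
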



The proof of Theorem \ref{thm:convergence} relies on the following 
result, which can be interpreted as an extension of the 
characterization of input-to-state stability for equilibrium points 
studied in \cite{ZPJ-YW:01} to the case of compact forward invariant 
sets \cite{ES-YW:95}.


\begin{lemma}{\bf\textit{(ISS with Respect to Compact Sets)}}
\label{lem:ISS}
Consider the system $z_{k+1} = f(z_k,v_k)$ where 
$\map{f}{ \real^n \times \real^m}{\real^n}$ is locally 
Lipschitz and $v_k$ is a bounded input sequence.
Let $\mc A \subset \real^n$ be a nonempty and compact  set that is 
forward invariant for the unforced system. Let $\map{V}{\real^n}{\realpos}$ be a continuous function such that
\begin{subequations}
\begin{align}
\label{eq:ISS-a}
\alpha_1(\vert z \vert_{\mc A}) &\leq V(z) 
\leq \alpha_2 (\vert z \vert_{\mc A}),\\
\label{eq:ISS-b}
V(f(z,v))-V(z) &\leq - \alpha_3 (\vert z \vert_{\mc A}) + \sigma(\vert v \vert_{\mc A}),
\end{align}
\end{subequations}
hold for all $z \in \real^n$, and 
$v \in \real^m$, where $\alpha_1, \alpha_2, \alpha_3$ are 
class $\mc{K}_\infty$ functions and $\sigma$ is of class
$\mc{K}$. Then, there exists a class $\mc{KL}$ 
function $\beta$ and a class $\mc K$ function $\gamma$ such that the 
system solutions satisfy
\begin{align*}
\vert z_k \vert_{\mc A} \leq \beta(\vert z_{k_0} \vert_{\mc A},k-k_0) 
+ \gamma (\sup_{t \geq k_0} \norm{v_t}),
\end{align*}
for all $0 \leq k_0 \leq k$, and for any $z_{k_0} \in \real^n$. \hfill $\Box$
\end{lemma}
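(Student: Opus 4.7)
The plan is to extend the Jiang--Wang characterization of discrete-time ISS via Lyapunov functions \cite{ZPJ-YW:01} from attracting equilibria to compact attracting sets $\mc A$. The key observation is that the classical arguments use $\norm{z}$ only as a continuous, nonnegative function that vanishes precisely at the origin, so they apply verbatim once $\norm{z}$ is replaced by the point-to-set distance $|z|_{\mc A}$, which has the same properties relative to $\mc A$ and is continuous by the compactness of $\mc A$. I read the right-hand side of (\ref{eq:ISS-b}) as $\sigma(\norm{v})$, consistent with the conclusion.

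First I would establish a gain-margin estimate: define $\rho \in \mc K$ by $\rho(r) := \alpha_3^{-1}(2\sigma(r))$; combining this with (\ref{eq:ISS-b}) and monotonicity of $\alpha_3$ shows that
$$V(f(z,v)) - V(z) \leq -\tfrac{1}{2}\alpha_3(|z|_{\mc A}) \quad \text{whenever } |z|_{\mc A} \geq \rho(\norm{v}).$$
This isolates a region outside of which $V$ strictly decreases by a margin controlled by $|z|_{\mc A}$ alone.

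The second step, which is the technical heart, is to convert this conditional decrease into a geometric-type contraction. I would invoke the standard construction of \cite{ZPJ-YW:01}: there exists $\hat\rho \in \mc K_\infty$ such that the rescaled function $W(z) := \hat\rho(V(z))$ satisfies, for some $\lambda \in (0,1)$ and $\tilde\sigma \in \mc K$,
$$W(f(z,v)) \leq \lambda\, W(z) + \tilde\sigma(\norm{v}).$$
Iterating this bound $k-k_0$ times and summing the geometric series yields
$$W(z_k) \leq \lambda^{k-k_0} W(z_{k_0}) + \tfrac{1}{1-\lambda}\tilde\sigma\!\bigl(\sup\nolimits_{t\geq k_0}\norm{v_t}\bigr).$$
Inverting $\hat\rho$ and combining with (\ref{eq:ISS-a}), namely $\alpha_1(|z_k|_{\mc A}) \leq V(z_k)$ and $V(z_{k_0})\leq \alpha_2(|z_{k_0}|_{\mc A})$, delivers the claimed estimate with class-$\mc{KL}$ function $\beta$ built from $\alpha_1^{-1},\hat\rho^{-1},\alpha_2,\lambda$ and class-$\mc K$ function $\gamma$ built from $\alpha_1^{-1},\hat\rho^{-1},\tilde\sigma,\lambda$, after applying the elementary inequality $\max\{a,b\}\leq a+b$ to split the two contributions.

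The main obstacle is the construction of the $\mc K_\infty$ rescaling $\hat\rho$ that turns the additive decrement $-\tfrac{1}{2}\alpha_3$ into a genuinely geometric one: when $\alpha_3\circ\alpha_2^{-1}$ is sublinear near the origin no linear $\hat\rho$ works, and a nonlinear reshaping (essentially the discrete-time analogue of Sontag's lemma on $\mc K_\infty$ estimates) is needed. I expect this to go through with the same argument as in \cite{ZPJ-YW:01}, since the compact-set generalization enters only through $|z|_{\mc A}$ replacing $\norm{z}$ and plays no role in these scalar comparison-function manipulations; the forward invariance of $\mc A$ under the unforced dynamics ensures that $|z|_{\mc A}=0$ is consistent with $V \equiv 0$ on $\mc A$, which is all that is required for the inversions involving $\alpha_1^{-1}$ and $\hat\rho^{-1}$ to be well-defined.
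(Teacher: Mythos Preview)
Your proposal is correct and takes essentially the same approach as the paper: the paper's proof is a one-sentence pointer to \cite[Lemma~3.5]{ZPJ-YW:01}, observing that when $\mc A$ is nonempty and compact the point-to-set distance $|z|_{\mc A}$ is well-defined and may replace the Euclidean norm throughout. Your write-up simply unpacks the Jiang--Wang argument (gain-margin step, $\mc K_\infty$ rescaling to a geometric decrease, iteration, and inversion through $\alpha_1,\alpha_2$) in more detail than the paper itself provides.
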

\begin{proof}
The claim follows by iterating the proof of 
\cite[Lemma 3.5]{ZPJ-YW:01}. Precisely, we note that, when $\mc A$ is 
nonempty and compact, the quantity $\vert z\vert_{\mc A}$ is 
well-defined and bounded for any $z \in \real^n$, and thus all 
Euclidean norms in \cite[Lemma 3.5]{ZPJ-YW:01} can be replaced by the 
point-to-set distance $\vert \cdot \vert_{\mc A}$.
\end{proof}

\begin{pfof}{Theorem \ref{thm:convergence}}
We begin by performing a change of variables for 
\eqref{eq:gradientFlowController}. 
Let $\tilde x_k := x_k - \mc M(u_k, w_k)$, where $\mc M(u,w) = \bar G u - \bar H w$,  $\bar G :=(I-A)^\inv B$, $\bar H :=(I-A)^\inv E $.
In the new variables:
\begin{align}
\label{eq:changeVariables}
\tilde x_{k+1} &=  A \tilde x_k- \mc M(u_{k+1},w_{k+1})  
+ \mc M(u_k,w_k)\\
&= A \tilde x_{k} - \bar G (u_{k+1} - u_{k})
- \bar H (w_{k+1} - w_{k}), \nonumber\\
u_{k+1} &= u_{k} - \eta (\nabla \phi(u_{k}) 
+ G^\tsp \nabla \psi(C \tilde x_k + Gu_k + Hw_k)). \nonumber
\end{align}
Next, let $f(u) := \phi(u) + \psi(Gu+Hw_k)$, and 
denote by $u^*$ any minimizer of $f(u)$.
We will show that the following Lyapunov function satisfies 
the assumptions of Lemma \ref{lem:ISS}:
\begin{align}
\label{eq:Udefinition}
U(x, u) :=  V(u) +  W(\tilde x),
\end{align}
where $V(u) = \frac{1}{\eta} (f(u)- f(u^*))$, 
$W(\tilde  x) = \tilde  x^\tsp \Ply \tilde  x$.

First, \eqref{eq:ISS-a} follows by application 
of~\cite[Lemma 4.3]{HKK:02} by noting that $U(x,u)$ is continuous, 
positive definite, and radially unbounded.
Next, we show \eqref{eq:ISS-b}.
By letting
$\sbs{F}{c}(\tilde  x, u): =  
-\nabla \phi(u) - G^\tsp \nabla \psi(C \tilde x + Gu + Hw_k)$,
and by noting that $\nabla f(u) = -\sbs{F}{c}(0,u)$, $V(\cdot)$ 
satisfies:
\begin{align}
\label{aux:Vdot-1}
& V(u_{k+1}) - V(u_k)  = 
\frac{f(u_{k+1}) - f(u_k)}{\eta} - \frac{f(u_{k+1}^*) + f(u_k^*)}{\eta}  
\nonumber \\
&\quad \leq \langle \nabla f(u_k), \frac{ u_{k+1} - u_k}{\eta} \rangle  + \frac{\ell}{2 \eta} \distA{u_{k+1}-u_k}^2 \nonumber\\
&\quad\quad\quad\quad + 
\underbrace{\langle \nabla f(u_k^*), \frac{ u_{k+1}^* - u_k^*}{\eta}}_{=0} \rangle  
+ \frac{\ell}{2 \eta} \norm{u_{k+1}^*-u_k^*}^2 \nonumber\\
&\quad \leq -  
(1 - \frac{\ell \eta}{2})\distA{\sbs{F}{c}(\tilde x_k,u_k)}^2 
+ \frac{\ell}{2 \eta} \norm{u_{k+1}^*-u_k^*}^2 \nonumber\\
&\quad\quad\quad\quad 
+ \ell_\psi \norm{C}\norm{G}\distA{\tilde x} \distA{\sbs{F}{c}(\tilde x_k,u_k)},
\end{align}
where the first inequality follows from 
$f(u) - f(v) \leq \nabla f(v)^\tsp (u-v) 
+ \frac{\ell}{2} \norm{u-v}^2$, which holds $\forall u,v\in \real^m$ 
under Assumption \ref{ass:lipschitzAndConvexity}, and the last 
inequality follows by noting that 
$\distA{u_{k+1}-u_k}^2 = \eta^2 \distA{\sbs{F}{c}(\tilde x_k,u_k)}^2$
and by using:
\begin{align*}
& \langle \nabla f(u_k), \frac{ u_{k+1} - u_k}{\eta} \rangle 
= - \langle \sbs{F}{c}(0,u_k) ,\sbs{F}{c}(\tilde x_k,u_k) \rangle \\
& \quad\quad = 
- \langle \sbs{F}{c}(0,u_k) + \sbs{F}{c}(\tilde x_k,u_k) - \sbs{F}{c}(\tilde x_k,u_k),  \sbs{F}{c}(\tilde x_k,u_k) \rangle\\
& \quad\quad \leq 
- \distA{\sbs{F}{c}(\tilde x_k,u_k)}^2 + \ell_\psi \norm{C} \norm{G} 
\norm{\tilde x_k} \distA{\sbs{F}{c}(\tilde x_k,u_k)},
\end{align*}
where we used Assum.~\ref{ass:lipschitzAndConvexity}.
By completing the squares~in~\eqref{aux:Vdot-1}:
\begin{align}
\label{aux:Vdot-2}
V(u_{k+1}) &- V(u_k)  \leq
- (1 - \frac{\epsilon}{2}- \frac{\ell \eta}{2})
\distA{\sbs{F}{c}(\tilde x_k,u)}^2  
\\
& + \underbrace{
\frac{\ell_\psi^2 \norm{C}^2\norm{G}^2}{2 \epsilon} }_{:=a_1}
\distA{\tilde x_k}^2
+ \underbrace{\frac{\ell}{2 \eta}}_{:=a_2} \norm{u_{k+1}^*-u_k^*}^2.  \nonumber
\end{align}
\vspace{-.2cm}

By denoting in compact form $\Delta w_k := w_{k+1} - w_k$
and by recalling that 
$u_{k+1}-u_k = \eta \sbs{F}{c}(\tilde x_k, u_k)$, $W(\cdot)$ 
satisfies:
\begin{align*}
&W(\tilde x_{k+1}) - W(\tilde x_k ) = 
 \tilde x^\tsp_k (A^\tsp \Ply A - \Ply)  \tilde x_k 
\nonumber \\
& \quad 
+ \eta^2 \sbs{F}{c}^\tsp(\tilde x_k, u_k)  \bar G^\tsp \Ply \bar G 
\sbs{F}{c}(\tilde x_k, u_k)  
+ \Delta w_k^\tsp \bar H^\tsp \Ply \bar H \Delta w_k 
\nonumber\\
&\quad  
- 2 \eta \tilde x_k^\tsp A^\tsp \Ply \bar G  
\sbs{F}{c}(\tilde x_k, u_k) 
- 2 \tilde x_k^\tsp A^\tsp \Ply \bar H \Delta w_k 
\nonumber\\
& \quad  
+ 2 \eta \sbs{F}{c}^\tsp(\tilde x_k, u_k) \bar G^\tsp \Ply \bar H \Delta w_k 
\nonumber \\
&\leq
- \underline \lambda(\Qly) \distA{x_k}^2 
+ 2 \eta \norm{A^\tsp \Ply \bar G} \distA{\tilde x_k} \distA{\sbs{F}{c}(\tilde x_k,u_k)} 
\nonumber \\
&\quad 
+ \eta^2 \norm{\bar G^\tsp \Ply \bar G}
\distA{\sbs{F}{c}(\tilde x_k,u_k)}^2
+ 2 \norm{A^\tsp \Ply \bar H} \distA{\tilde x_k} \norm{\Delta w_k}
\nonumber \\
&\quad 
+ \norm{\bar H^\tsp \Ply \bar H} \norm{\Delta w_k}^2 
+ 2 \eta \norm{\bar G^\tsp \Ply \bar H} 
\distA{\sbs{F}{c}(\tilde x_k,u_k)} \norm{\Delta w_k}
\end{align*}
By completing the squares:
\begin{small}
\begin{align}
\label{eq:Wdot}
&W(\tilde x_{k+1}) - W(\tilde x_k ) \leq 
- (1-\epsilon) \underline \lambda(\Qly) \distA{\tilde x_k}^2 
\nonumber\\ 
& \quad 
+ \eta^2 \underbrace{
\left( \frac{2 \norm{A^\tsp \Ply \bar G}^2}{\epsilon 
\underline \lambda(\Qly)}  
+ \norm{\bar G^\tsp \Ply \bar G}\right) 
}_{:=b_1}
\distA{\sbs{F}{c}(\tilde x_k,u_k)}^2
\nonumber\\ 
& \quad 
+ \underbrace{
\left( \frac{2 \norm{A^\tsp \Ply \bar H}^2}{\epsilon 
\underline \lambda(\Qly)}  
+ \norm{\bar H^\tsp \Ply \bar H}\right) 
}_{:=b_2}
\norm{\Delta w_k}^2
\nonumber\\ 
& \quad 
+ 2 \eta \norm{\bar G^\tsp \Ply \bar H} 
\distA{\sbs{F}{c}(\tilde x_k,u_k)}^2
\norm{\Delta w_k}^2 .
\end{align}
\end{small}
\vspace{-.1cm}

\noindent
By combining \eqref{aux:Vdot-2}-\eqref{eq:Wdot} and by completing the
squares:
\begin{align*}
& U(\tilde  x_{k+1},u_{k+1})-U(\tilde x_{k},u_{k}) \leq 
-(1-\epsilon - \frac{\ell \eta}{2}) 
\distA{\sbs{F}{c}(\tilde x_k,u)}^2  
\nonumber\\
& \quad 
+ a_1 \distA{\tilde x_k}^2
+ a_2 \norm{u_{k+1}^*-u_k^*}^2
- (1-\epsilon) \underline \lambda(\Qly) \distA{\tilde x_k}^2 
\nonumber\\
& \quad 
+  \eta b_1 \distA{\sbs{F}{c}(\tilde x_k,u_k)}^2
+ (b_2 + b_3)  \norm{\Delta w_k}^2,
\end{align*}
where 
$b_3 = 2 \eta^2 \norm{\bar G^\tsp \Ply \bar H} /\epsilon$.
In summary, by letting 
$z :=  (\distA{\sbs{F}{c}(\tilde x_k,u)}, \distA{\tilde x_k}) $,
$v_1 := \norm{u_{k+1}^*-u_k^*}$, and $v_2 := \norm{\Delta w_k}$, 
if the following conditions are satisfied:
\begin{align*}
\eta &< \frac{1-\epsilon}{\ell/2 + b_1}, &
\underline{\lambda}(\Qly) &> \frac{a_1}{1-\epsilon},
\end{align*}
then $U(x_k,u_k)$ satisfies \eqref{eq:ISS-b} with:

\begin{align*}
\alpha_3(z) &= \min \{ (1-\epsilon) - \frac{\eta \ell}{2} - \eta b_1,
(1-\epsilon) \underline \lambda(\Qly) -a_1\} z^2,\\
\sigma(v_1,v_2) &=  a_2 v_1^2 + (b_2+b_3) v_2^2 .
\end{align*}
Finally, the claim follows by taking the supremum among all 
$u_k^*$ and by replacing the Hausdorff distance.
\end{pfof}

Theorem~\ref{thm:convergence} asserts that a sufficiently-small choice
of the stepsize $\eta$ is guarantees convergence to the optimizers, up
to an asymptotic error that depends on the time-variability of the 
optimal set and on the time-variability of the unknown exogenous 
signal~$w_k$. 
Although an exact computation of $\eta^*$ requires the
knowledge of the system matrices $(A, C)$, 
Theorem~\ref{thm:convergence} provides an existence claim for the 
stepsize $\eta$. Further, we note that the requirement
$\underline \lambda (\Qly) > \frac{a}{\epsilon ( 1 -\epsilon)}$
is non-restrictive, since the choice of $\Qly$ is arbitrary in 
Assumption~\ref{ass:stabilityPlant}.


\begin{remark}{\bf\textit{(Relationship with Classical Convergence Results)}}
We note that, when the plant is infinitely fast (i.e. the 
plant dynamics in \eqref{eq:gradientFlowController} are replaced by 
$y_k = G u_k + H w_k$), standard results 
(see e.g. \cite{SB-LV:04}) guarantee convergence of gradient-like 
dynamics for all $\eta< \sbs{\eta}{static}:= 2/\ell$.
By noting that $\eta^*$ in Theorem~\ref{thm:convergence} is strictly 
smaller that $\sbs{\eta}{static}$, the theorem suggests that a 
strictly smaller stepsize is required when the system dynamics are 
non-negligible.~
\QEDB
\end{remark}

\begin{figure}[t]
\centering \subfigure[]{\includegraphics[width=.48\columnwidth]{%
G_err_montecarlo_vanishing}}
\centering \subfigure[]{\includegraphics[width=.48\columnwidth]{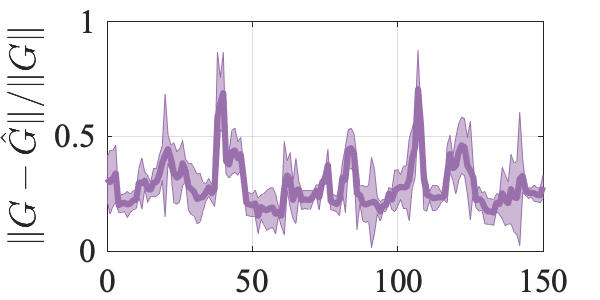}}
\centering \subfigure[]{\includegraphics[width=.48\columnwidth]{%
normW_montecarlo_vanishing}} 
\centering \subfigure[]{\includegraphics[width=.48\columnwidth]{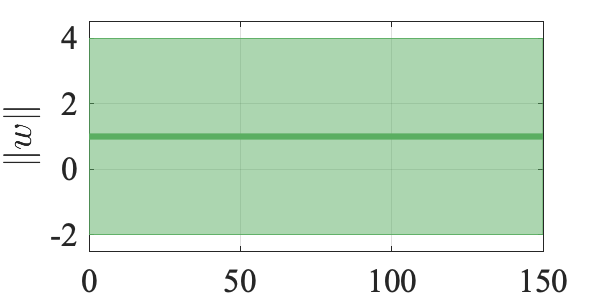}}
\caption{Error between data-driven $G$ and model-based $G$ when the 
training data is affected by an unknown disturbance $w_k$. 
The illustration has been produced by using the output of a Montecarlo
simulation where the realization of $w$ was varied over $10,000$ 
samples of a IID Gaussian process. Continuous lines illustrate the 
mean of the trajectory, shaded areas illustrates the 
$3$-standard deviation confidence intervals.
(a)-(c) When the exogenous disturbance is constant over time 
(equivalently, when the variance of $w_k$ in the Montecarlo 
simulations decays to zero asymptotically) then 
Corollary~\ref{cor:constantNoiseG} guarantees 
$\norm{G-\hat G} \rightarrow 0$. (b)-(d) When the training data is 
affected by time-varying disturbance (equivalently, when the variance 
of $w_k$ in the Montecarlo simulations is constant over time), 
Theorem~\ref{thm:data-drivenTransferFunction} allows us to approximate
$G$ up to a finite error.}
\label{fig:computeG}
\end{figure}

\section{Simulation Results}
\label{sec:6}
To illustrate the conclusions drawn  in Theorem 
\ref{thm:data-drivenTransferFunction}, we consider a system 
with $n = 20$, $m = r = 10$, $p=n$, and matrices $(A,B,C,E)$ with 
random entries that satisfy Assumption \ref{ass:stabilityPlant}.
Fig. \ref{fig:computeG} illustrates the error in the computed 
transfer function $\norm{G-\hat G}$, where $G$ denotes  the model-
based steady-state transfer function, and $\hat G$ denotes the 
transfer function computed according to 
Corollary~\ref{cor:constantNoiseG}.
$\hat G$ has been computed by using a rolling-horizon window that 
discards old samples over time. The length of the collection window is 
chosen equal to the smallest number of samples needed to guarantee 
persistence of excitation, and all samples (including the ones used 
for initialization) are obtain by using a random, persistently 
exciting, input.
Fig.~\ref{fig:computeG}(a)-(c) validates the conclusions drawn in 
Theorem~\ref{thm:data-drivenTransferFunction}: it shows that 
when the disturbance affecting the training data $w_k$ is constant, 
then the technique proposed in Corollary~\ref{cor:constantNoiseG}
allows us to compute $G$ from a sample trajectory.
Fig.~\ref{fig:computeG} illustrates numerically that the error 
$\norm{G-\hat G}$ is an increasing function of the 
time-variability of the disturbance $w_k$ affecting the training data.

Fig. \ref{fig:controlTrajectories}(a) illustrates the tracking error
for a simulation of the dynamics \eqref{eq:gradientFlowController}, 
subject to the time-varying disturbance $w$ illustrated in 
Fig. \ref{fig:controlTrajectories}(b). 
In this case, the map $G$ was computed from noiseless historical 
data, according to Theorem~\ref{thm:convergence}.
For simplicity, we consider the case where $
\phi(u) = u^\tsp Q_u u$, $Q_u \succ 0$ and 
$\psi(y)=(y-\sps{y}{ref})^\tsp Q_y (y-\sps{y}{ref})$, 
$Q_y \succ 0$.
The figure validates Theorem \ref{thm:convergence} by showing that the
tracking error is governed by two terms: (i) an error component 
associated with the initial conditions that decays to zero 
asymptotically, and  (ii) an error component associated with the 
time-variability of $w_k$, which vanishes only if $w_{k+1}-w_k = 0$.
The numerical simulations also suggest that values of $\eta$ larger 
than $\eta^*$  prevent the convergence of the method.

\begin{figure}[t]
\centering \subfigure[]{\includegraphics[width=\columnwidth]{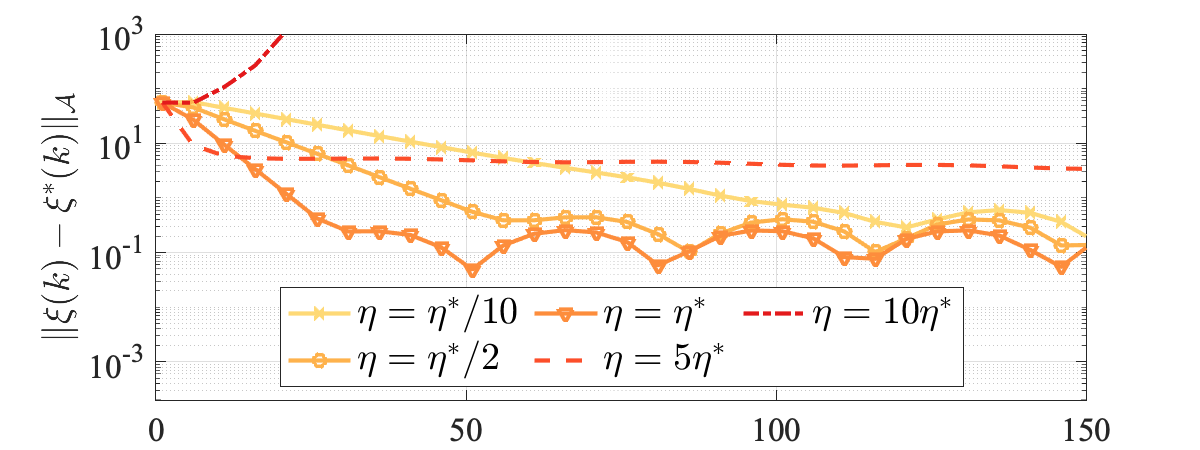}}
\centering \subfigure[]{\includegraphics[width=\columnwidth]{%
disturbance_w}}
\caption{(a) Tracking error of \eqref{eq:gradientFlowController}. 
(b) Time-varying disturbance. The simulation suggests that values of 
$\eta$ larger than $\eta^*$ prevent convergence.
}
\label{fig:controlTrajectories}
\end{figure}

\section{Conclusions}
\label{sec:7}
We proposed a data-driven method to steer a dynamical system to the 
solution trajectory of a time-varying optimization problem.
The technique does not rely on any prior knowledge or estimation of the
system matrices or of the exogenous disturbances affecting the model 
equation. Instead, we showed that noise-free input-output data 
originated by the open-loop system can be used to compute the 
steady-state transfer function of the dynamical environment.
Moreover, we showed that convergence of the proposed algorithm to the 
time-varying optimizers is guaranteed when the dynamics of the 
controller are sufficiently slower than those of the dynamical system.
This work sets out several opportunities for future works, including 
extensions to scenarios where historical data is affected by 
non-constant noise terms, and the development of data-driven methods 
for the convergence analysis of the interconnected system.

\bibliographystyle{IEEEtran}
\bibliography{bib/brevalias,bib/main_GB,bib/Main,bib/GB,bib/add}
\end{document}